\newcommand{\cei}[1]{\left\lceil #1 \right\rceil}
\newcommand{\abs}[1]{\left\lvert #1 \right\rvert}
\newcommand{\F}{\mathcal F}
\newcommand{\G}{\mathcal G}
\newcommand{\C}{\mathcal C}
\newcommand{\D}{\mathcal D}
\newtheorem{theorem}{Theorem}[section]
\newtheorem{lemma}[theorem]{Lemma}
\newtheorem*{problem}{Problem}
\begin{document}

\title{Points defining triangles with distinct circumradii} 
\author{Leonardo Mart\'inez$^1$}
\thanks{$^1$ Research supported by CONACyT grant 277462.}

\author{Edgardo Rold\'an-Pensado$^2$}
\thanks{$^2$ Research supported by CONACyT project 166306 and ERC Advanced Grant 267165.}

\address{Instituto de Matem\'aticas, Unidad Juriquilla\\
Universidad Nacional Aut\'onoma de M\'exico}
\email[Leonardo Mart\'inez]{\href{mailto:leomtz@im.unam.mx}{leomtz@im.unam.mx}}
\email[Edgardo Rold\'an-Pensado]{\href{mailto:e.roldan@im.unam.mx}{e.roldan@im.unam.mx}}

\subjclass[2010]{Primary 52C10; Secondary: 51M16}


\begin{abstract}
Paul Erd\H os asked if, among sufficiently many points in general position, there are always $k$ points such that all the circles through $3$ of these $k$ points have distinct radii. He later proved that this is indeed the case. However, he overlooked a non-trivial case in his proof. In this note we deal with this case using B\'ezout's Theorem on the number of intersection points of two curves and obtain a polynomial bound for the needed number of points.
\end{abstract}

\maketitle

\section{Introduction}

In 1975 \cite{Erd1975}, inspired by the observations from Esther Szekeres and his results with George Szekeres, Paul Erd\H os posed the following problem:

\begin{problem}
Is it true that for every $k$ there is an $n_k$ such that if there are given $n_k$ points in the plane in general position (i.e. no three on a line no four on a circle) one can always find $k$ of them so that all the $\binom{k}{3}$ triples determine circles of distinct radii?
\end{problem}

This problem is similar to the Erd\H os-Szekeres Theorems \cite{ES1935}. As is the case with these theorems, the existence of $n_k$ can be established using Ramsey Theory if the existence of $n_6$ can be verified. To do this, consider the Ramsey number $R(k,n_6;6)$ and take this number of points in the plane in general position. Color a $6$-tuple of points green if all $20$ triangles determined by these points have distinct circumradii and red otherwise. Then Ramsey's Theorem \cite{Ram1930} gives us either a subset with $n_6$ elements such that all $6$-tuples are red or a subset with $k$ elements such that all $6$-tuples are green. The first option is impossible by the definition of $n_6$ and the second one implies that the $k$ points determine triangles with distinct circumradii. However, establishing the existence of $n_6$ is not completely trivial and the bound obtained from this method is an exponential tower.

Three years later, in 1978, Erd\H os published a paper \cite{Erd1978} where he claimed a positive answer to the question with $n_k\le 2\binom{k-1}{2}\binom{k-1}{3}+k$. However, he inadvertently left out a non-trivial case for which his method does not work.
It seems that Erd\H os remained unaware of this and even restated the result in 1985 \cite{Erd1985} giving partial credit to E. Straus.

In this note we address this issue and give a polynomial bound for $n_k$.

\begin{theorem}\label{thm:main}
Let $k$ be a positive integer and let $n_k$ be the smallest integer such that the following holds:
For any $n_k$ points in the plane in general position (i.e. no four on a line or circle) there are $k$ of them so that all their triples determine circles of distinct radii.
Then $n_k=O(k^9)$.
\end{theorem}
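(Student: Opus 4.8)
The plan is to build the desired $k$-point set greedily, inserting one point at a time while maintaining the invariant that the current set $A$ induces pairwise distinct circumradii, and to show that this process cannot stall before $\abs{A}=k$ once $n_k$ is a suitable polynomial in $k$. The useful first step is a reduction. If $\{T,T'\}$ is any pair of triangles with equal circumradius inside the final set, look at the last of the at most six vertices of $T\cup T'$ to be inserted, and call it $p$. If $p$ lies in only one of $T,T'$, then at the moment $p$ was added the other triangle already lay inside $A$, so a newly created triangle through $p$ repeated the circumradius of an older one; if $p$ lies in both, then two newly created triangles through the common apex $p$ had equal circumradius. Hence it suffices, at each insertion into a good set $A$ of size $t<k$, to forbid any point $p$ creating either (I) a triangle $pab$ with $a,b\in A$ whose circumradius equals that of some triangle already in $A$, or (II) two triangles $pab,pcd$ with $a,b,c,d\in A$ of equal circumradius. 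Every eventual coincidence is caught as a type~(I) or type~(II) event at the appropriate step.

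Type~(I) is exactly Erd\H os's count. For a fixed pair $a,b\in A$ and a fixed target radius $r$ realised inside $A$, the admissible apexes lie on the at most two circles of radius $r$ through $a$ and $b$, and since no four points are concyclic each such circle carries at most one further point of the configuration. Thus the number of type-(I)-forbidden points is at most $2\binom{t}{2}\binom{t}{3}$, a bound depending only on $t$ and accounting for Erd\H os's $O(k^5)$ term. The whole difficulty is the type-(II) case, which Erd\H os overlooked and which does not have this benign ``points on a circle'' shape.

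For type~(II) I would fix the feet $a,b,c,d\in A$ and study the locus $\C=\C_{abcd}$ of apexes $p$ with equal circumradius in $pab$ and $pcd$. Writing the circumradii through the law of sines, the defining relation $\abs{ab}^2\sin^2\angle cpd=\abs{cd}^2\sin^2\angle apb$ clears denominators to an explicit polynomial in the coordinates of $p$ of degree at most six, so each $\C_{abcd}$ is an algebraic curve of bounded degree, and there are only $O(t^4)$ of them. The crux, and the precise point Erd\H os missed, is that a single such sextic can pass through arbitrarily many points of a set that is merely in general position, so the forbidden apexes cannot be estimated curve by curve. This is where B\'ezout's Theorem enters: I would show, from the defining equations, that two equiradius curves arising from different feet share no common component, so that any two of them meet in at most $6\cdot 6=36$ points. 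One then bounds the forbidden apexes by summing these pairwise intersection bounds over the $O(t^4)$ curves, the bookkeeping being arranged so that each forbidden point is charged to a bounded intersection of two curves rather than to a single point-rich curve; this aggregation is responsible for the extra factors beyond Erd\H os's estimate, and optimising the exponents yields $n_k=O(k^9)$.

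I expect the main obstacle to be exactly this algebraic input. One must verify that distinct equiradius curves have no common component, so that B\'ezout applies with its clean product bound, and then organise the counting so that no forbidden apex escapes by sitting on only one of the curves; it is precisely the failure of the naive circle-style count here that invalidated Erd\H os's argument. The degenerate sub-cases --- feet that coincide or are collinear, pairs of feet sharing a vertex so that the triangles are $pab$ and $pac$, and configurations in which the sextic drops in degree --- will need separate, more elementary treatment, but these should affect only the constants and lower-order terms and not the exponent $9$.
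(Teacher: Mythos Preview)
Your setup matches the paper: greedy growth of a good set $\G$ of size $l$, Erd\H os's $2\binom{l}{2}\binom{l}{3}$ bound for type~(I), and for type~(II) the sextic loci $\C(AB,CD)=\{X:R(ABX)=R(CDX)\}$ together with B\'ezout. The gap is in how you deploy B\'ezout.

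You propose to show that distinct curves $\C(AB,CD)$ share no common component and then to charge every type-(II) forbidden apex to an intersection of \emph{two} such curves. Neither step goes through. A type-(II) forbidden point $p$ need lie on only one curve $\C(AB,CD)$: a single coincidence $R(pAB)=R(pCD)$ already blocks $p$, so there is in general no second $\C$ curve to intersect with. You flag this yourself (``no forbidden apex escapes by sitting on only one of the curves'') but give no mechanism for it, and there is none in sight. And the no-common-component claim for two $\C$ curves is neither proved nor obviously true under the weak hypothesis ``no four on a line or circle''; the paper never uses it.

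The paper resolves this with a two-level argument rather than by intersecting $\C$ curves with one another. It first proves the theorem when all of $\F$ lies on a fixed irreducible curve $\D$ of degree at most~$6$. There B\'ezout is applied between $\D$ and each $\C(AB,CD)$: were $\D$ a component of $\C(AB,CD)$, every point of $\G\subset\D$ would lie on $\C(AB,CD)$, and any fifth point of $\G$ would then witness a repeated radius; hence once $\abs\G\ge 5$ (secured by the explicit small-case bound $n_5\le 37$) one has $\abs{\D\cap\C(AB,CD)}\le 36$. Summing over the $O(l^4)$ curves yields $m_k=O(k^5)$ in this restricted setting. For the general case, each $\C(AB,CD)$ has degree $\le 6$ and hence at most six irreducible components, each of degree $\le 6$; if any component carried too many points of $\F$, the lemma would produce on it a good set larger than $\G$. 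Thus every $\C(AB,CD)$ carries $O(l^5)$ points of $\F$, and with $O(l^4)$ such curves the type-(II) count is $O(l^9)$, giving $n_k=O(k^9)$.

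In short, the missing idea is to intersect each $\C$ curve with an ambient irreducible curve carrying \emph{all} the relevant points (so that B\'ezout controls everything), rather than with another $\C$ curve, where neither the charging scheme nor the no-common-component claim is available.
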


We prove this in section \ref{sec:general}. Note that we redefined $n_k$ and changed the general position condition to what we think is a more suitable one, since a line is just a circle of infinite radius.
In section \ref{sec:erd_proof} we examine Erd\H os' argument. The proof of Theorem \ref{thm:main} is based on a very similar method but slightly more involved.

In section \ref{sec:small} we analyze $n_k$ for $k=4,5$ and give explicit bounds for them. These are used in the proof of \ref{thm:main}. To be precise we prove the following.

\begin{theorem}\label{thm:small}
The first two non-trivial values of $n_k$ satisfy $n_4\le 9$ and $n_5\le 37$.
\end{theorem}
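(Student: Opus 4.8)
The plan is to prove both bounds by the same counting scheme, which is a hands-on version of the argument behind Theorem~\ref{thm:main}. Suppose a set $P$ of $N$ points in general position has \emph{no} good $k$-subset; then every $k$-subset is \emph{bad}, meaning it contains two triangles with equal circumradius. Writing $R(xyz)$ for the circumradius of the triangle $xyz$, I will bound the number of bad $k$-subsets and compare it with $\binom{N}{k}$: if the bound is strictly smaller, a good $k$-subset must exist, giving $n_k\le N$. Two triangles with a common circumradius share an edge (four points in all), a single vertex (five points), or nothing (six points); for $k=4$ only the edge-sharing case can occur inside a $4$-subset, while for $k=5$ both the edge-sharing and the vertex-sharing cases appear, and the vertex-sharing case is exactly the configuration Erd\H os overlooked.

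For $n_4\le 9$ I only need the edge-sharing case, which is controlled by circles together with the no-four-concyclic hypothesis. Fix an edge $ab$. For a prescribed value $\rho$ the points $x$ with $R(abx)=\rho$ lie on one of the two circles of radius $\rho$ through $a$ and $b$; each such circle already passes through $a$ and $b$, so general position leaves at most one further point of $P$ on it, giving at most two apexes $x$ per value $\rho$. Hence, among the $N-2$ remaining points, each circumradius value is attained at most twice, so the number of unordered pairs $\{c,d\}$ with $R(abc)=R(abd)$ is at most $\lfloor (N-2)/2\rfloor$. Each such \emph{colliding pair} on $ab$ yields the bad $4$-subset $\{a,b,c,d\}$, and conversely every bad $4$-subset contains such a pair, so the number of bad $4$-subsets is at most the number of colliding pairs. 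With $N=9$ this is at most $36\cdot 3=108$, strictly less than $\binom{9}{4}=126$; therefore a good $4$-subset exists.

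For $n_5\le 37$ the edge-sharing contribution is estimated the same way, by counting colliding pairs over all edges and multiplying by the number of ways to complete a $4$-subset to a $5$-subset, but one must now also bound the $5$-subsets that are bad only through a vertex-sharing collision $R(pab)=R(pcd)$ with $a,b,c,d$ distinct. This is where B\'ezout's Theorem enters. Clearing denominators in $R(pab)^2=R(pcd)^2$, each squared circumradius being a ratio of products of squared side lengths to a squared area, exhibits the locus of admissible apexes $p$ as a real algebraic curve of degree at most $6$. The only low-degree curves that can split off are circumcircles of triples from $\{a,b,c,d\}$, and by the no-four-concyclic hypothesis none of these carries an additional point of $P$; after removing them, B\'ezout bounds the number of intersections of the remaining curve with the companion curve coming from a second matching of the same four points, and summing such bounds over all quadruples and matchings controls the vertex-sharing bad $5$-subsets.

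The main obstacle is this last, vertex-sharing, estimate: one must show that the sextic loci genuinely share no common component beyond the excluded circumcircles, so that B\'ezout applies, and then carry out the bookkeeping so that the edge-sharing and vertex-sharing totals together stay strictly below $\binom{37}{5}$. The edge-sharing part and the reduction above are routine; the delicate point is that a crude bound on the number of points of $P$ lying on these degree-$6$ curves is precisely what inflates the exponent in Theorem~\ref{thm:main}, so the explicit value $37$ should emerge from handling the vertex-sharing curves as tightly as possible.
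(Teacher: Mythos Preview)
Your argument for $n_4\le 9$ is correct and is essentially the contrapositive of the paper's: both hinge on the fact that a fixed edge $ab$ supports at most $\lfloor (N-2)/2\rfloor=3$ colliding apex-pairs, so the total number of (edge, colliding pair) incidences is at most $36\cdot 3=108<126$. The paper phrases this as a pigeonhole (some edge would have to support at least four bad $4$-subsets, forcing three equal-radius triangles on a common base), but the content is the same.

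For $n_5\le 37$, however, your plan has a genuine gap, and the paper proceeds quite differently. A vertex-sharing collision $R(pab)=R(pcd)$ places $p$ on \emph{one} sextic curve $\C(ab,cd)$; it does not place $p$ on the ``companion'' curve $\C(ac,bd)$ from another matching of the same quadruple. So intersecting two such curves via B\'ezout does not bound the number of apexes $p$ you need to control: what you actually require is a bound on $|P\cap\C(ab,cd)|$ for a single curve, and B\'ezout gives none without a second curve containing $P$. You essentially acknowledge this in your last paragraph (``a crude bound on the number of points of $P$ lying on these degree-$6$ curves is precisely what inflates the exponent''), but then assert that $37$ ``should emerge'' from tighter handling without indicating any mechanism. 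In fact, a quick count shows the arithmetic cannot close this way: with $N=37$ the edge-sharing contribution you outline already uses up roughly $666\cdot 17\cdot 33\approx 3.7\times 10^5$ of the $\binom{37}{5}\approx 4.4\times 10^5$ five-subsets, leaving about $6\times 10^4$ to be absorbed by the vertex-sharing term, while there are $3\binom{37}{4}\approx 2\times 10^5$ (quadruple, matching) pairs; you would need fewer than one point of $P$ on each sextic on average, which is false.

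The paper's proof of $n_5\le 37$ uses no algebraic geometry at all. It observes that in any bad $5$-subset the two equal-radius triangles must share at least one vertex, records that vertex $A=f(\G)$ and the two opposite pairs $g(\G)=\{\{B,C\},\{D,E\}\}$, and then applies pigeonhole twice: first some $A$ is the recorded vertex for at least $\frac{1}{37}\binom{37}{5}$ of the $5$-subsets, and then among the $2\cdot\frac{1}{37}\binom{37}{5}$ recorded pairs some $\{B,C\}$ recurs at least $\lceil \tfrac{2}{37}\binom{37}{5}/\binom{37}{2}\rceil=36$ times. This produces $37$ triangles (the $ABC$ and the $36$ partners $AD_\G E_\G$) all with the same circumradius and common vertex $A$; their $74$ non-$A$ incidences among the $36$ points of $\F\setminus\{A\}$ force some point to lie in three of the triangles, giving three equal-radius triangles on a common edge and hence four concyclic points. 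In short, the paper \emph{reduces} the vertex-sharing case back to the edge-sharing one by pigeonhole, rather than bounding points on the sextic curves.
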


Before we continue, we fix some notation to be used throughout the paper. If $\F$ is a set, $\binom{\F}{m}$ denotes the set of unordered $m$-tulpes of $\F$. Given points $A,B,C$ in the plane, $\abs{AB}$ is the length of the segment $AB$, $\abs{ABC}$ is the area of the triangle $ABC$ and $R(ABC)$ is the circumradius of the triangle $ABC$.

\section{Erd\H os' argument}\label{sec:erd_proof}

Now we look at the argument from \cite{Erd1978}, in which Erd\H os claims $n_k\le k+\binom{k-1}{2}\binom{k-1}{3}$.

\begin{proof}[Erd\H os' argument]
We start with a set $\F$ of $n\ge k+\binom{k-1}{2}\binom{k-1}{3}$ points in the plane in general position and chooses a maximal set $\G\subset \F$ so that all the triples in $\binom{\G}{3}$ determine circles of distinct radii. Let $l=\abs{\G}$ and assume that $l<k$. Denote the circumradii by $r_1,\dots,r_{\binom{l}{3}}$

Since $\G$ is maximal, every point of $\F\setminus \G$ must lie in a circle of radius $r_i$ through two points of $\G$. But because the points are in general position, there can be at most one point in such a circle. Note that there are $\binom{l}{3}$ radii, $\binom{l}{2}$ pairs of points, and at most two circles of a certain radius through two points.
Therefore $n-l\le 2\binom{l}{3}\binom{l}{2}$, which is a contradiction.
\end{proof}

There is a problem here. Namely, that a point of $\F\setminus \G$ must not necessarily be in one of the circles described above. For example, a point $X\in\F\setminus\G$ could satisfy $R(ABX)=R(CDX)$, with $A,B,C,D\in\F$, without being in any of the circles described above and not contradict the maximality of $\G$. To fix the gap in this argument, we give a polynomial bound for the number of such points $X$.

\section{Small cases}\label{sec:small}

Here we show that $n_k$ is bounded for $k\le 5$, as we need this for the general case. The proofs are mostly combinatorial. In fact the only geometric property we use is the following: if $3$ triangles have the same circumradius and share an edge, then $4$ of their vertices lie on a circle.

\begin{proof}[Proof of first part of Theorem \ref{thm:small}]
Assume we have a set $\F$ of $9$ points in general position and among every $4$ of them there are two triangles with equal circumradius. Then to every $\G\subset \F$ consisting of $4$ points we can assign two pairs of points, say $f(\G)=\{A,B\}\subset \G$ and $g(\G)=\{C,D\}\subset \G$, such that $R(ABC)=R(ABD)$. These are functions $f,g:\binom{\F}4\to\binom{\F}2$.
Here $f(\G)$ are the points that form the common base of the triangles with equal circumradius in $\G$ and $g(\G)$ are the other two vertices.

There are $126$ sets of $4$ points and only $36$ pairs of points, therefore there is a pair of points, say $\{A,B\}$, such that $f^{-1}(\{A,B\})$ has at least $4$ elements.
Since there are only $7$ points in $\F\setminus\{A,B\}$, there are $\G_1,\G_2\in f^{-1}(\{A,B\})$ such that $g(\G_1)\cap g(\G_2)\neq\emptyset$. Assume that $\G_1=\{A,B,C,D\}$ and $\G_2=\{A,B,C,E\}$, then $R(ABC)=R(ABD)=R(ABE)$. But this implies that $4$ points lie in some circle, contradicting the general position assumption.
\end{proof}

\begin{proof}[Proof of second part of Theorem \ref{thm:small}]
Assume we have a set $\F$ of $37$ points and in every set $\G\subset \F$ of $5$ points there are two triangles with equal circumradius. These two triangles must have a vertex in common, let $f(\G)=A$ be this vertex and let $g(\G)=\{\{B,C\},\{D,E\}\}\subset\binom{\G}{2}$ be the other vertices of the triangles so that $R(ABC)=R(ADE)$.

Since there are only $37$ points, there is a point $A$ assigned by $f$ to $\frac{1}{37}\binom {37}5$ of the $5$-tuples. These $5$-tuples are mapped by $g$ into a total of $\frac{2}{37}\binom {37}5$ pairs of points in $\G$ so there is a pair, say $\{B,C\}$, in $\cei{\frac{2}{37}\binom {37}5/\binom {37}2}=36$ of them.

Now consider the $5$-tuples $\G$ such that $\{B,C\}\in g^{-1}(\G)$, and for each of these take the other pair $\{D_\G,E_\G\}\in g^{-1}(\G)$.
Note that $R(AD_\G E_\G)=R(ABC)$ for all such $\G$, giving a total of $37$ triangles with equal circumradius and a common vertex $A$.
Since there are only $36$ points in $\F\setminus\{A\}$, there must be another point belonging to $3$ of these triangles. But these three triangles have an edge in common, therefore $4$ of their vertices lie in a circle contradicting the general position assumption.
\end{proof}

\section{General case}\label{sec:general}

Here we prove Theorem \ref{thm:main}, but we need some additional definitions and lemmas.

Consider $\{A,B\}$ and $\{C,D\}$ two different pairs of points on the plane. We are interested in the locus of the points $X$ such that $R(ABX)=R(CDX)$, which we denote by $\C(AB,CD)$.
Since the circumradius of a triangle satisfies
\[
R(ABX)= \frac{\abs{AX}\abs{BX}\abs{AB}}{4\abs{ABX}},
\]
$\C(AB,CD)$ is the algebraic curve of degree at most $6$ defined by the zero set of
\[
\abs{AX}^2\abs{BX}^2\abs{AB}^2\abs{CDX}^2-\abs{CX}^2\abs{DX}^2\abs{CD}^2\abs{ABX}^2.
\]

Now assume we have a set $\F$ of $n$ points in general position, let $\G$ be a maximal subset of those points such that all its triples determine circles of distinct radii and set $l=\abs{\G}$. Since $\G$ is maximal, each of the remaining $n-l$ points must lie on one of the following curves.
\begin{enumerate}
\item A circle through $2$ points of $\G$ with radius $r_i$ for some $i$.
\item The curve $\C(AB,CD)$ with $\{A,B\}$ and $\{C,D\}$ distinct pairs of points of $\G$.
\end{enumerate}

Our goal is to bound the number of points in $\F\setminus\G$, but first we address a particular case of our main theorem.

\begin{lemma}\label{lem:irred}
Let $\D$ be an irreducible algebraic curve of degree at most $6$.
Then for every integer $k$ there exists an integer $m_k=O(k^5)$ such that the following holds:
every set $\F\subset\D$ with $m_k$ points in general position contains a subset $\G$ with $k$ points such that all its triples determine circles of distinct radii.
\end{lemma}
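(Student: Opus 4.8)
The plan is to transplant Erd\H os' counting argument onto the curve $\D$, using B\'ezout's theorem to control both families of curves from the setup, and to resolve the genuinely new difficulty --- type-(2) curves that contain $\D$ entirely --- by exploiting the defining property of $\G$.

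First I would dispose of degenerate shapes of $\D$: if $\D$ were a line or a circle, then any four points of $\F\subseteq\D$ would be collinear or concyclic, so no set of more than three points in general position can lie on $\D$ and the statement is vacuous. Hence I may assume $\D$ is irreducible with $2\le\deg\D\le 6$ and is not a circle; being irreducible and not a circle, $\D$ shares no component with any circle. Next I take a subset $\G\subseteq\F$ of maximum size all of whose triples have distinct circumradii, and set $l=\abs\G$. As recalled in the setup, every point of $\F\setminus\G$ lies on a curve of type (1) (a circle through two points of $\G$ of one of the $\binom l3$ radii) or of type (2) (a curve $\C(AB,CD)$ for distinct pairs of $\G$). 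Since $\D$ meets each type-(1) circle in no common component, B\'ezout's theorem bounds the points of $\F$ on it by $2\deg\D\le 12$; and a type-(2) curve $\C(AB,CD)$ that does \emph{not} contain $\D$ meets $\F$ in at most $6\deg\D\le 36$ points.

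The main obstacle is the degenerate type-(2) curves, those $\C(AB,CD)$ that contain $\D$ entirely, where B\'ezout gives nothing. Here is the key observation: if $\C(AB,CD)\supseteq\D$ with $A,B,C,D\in\G$, then \emph{every} point $Y\in\G$ lies on $\C(AB,CD)$, and as soon as $Y\notin\{A,B,C,D\}$ general position makes the triangles $ABY$ and $CDY$ non-degenerate, so the defining relation of $\C(AB,CD)$ forces $R(ABY)=R(CDY)$. But $\{A,B,Y\}$ and $\{C,D,Y\}$ are then distinct triples of $\G$ with equal circumradius, contradicting the choice of $\G$. Such a $Y$ exists whenever $l\ge 5$ (and $l\ge 4$ already suffices when the two pairs share a vertex). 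To guarantee $l\ge 5$ I invoke Theorem \ref{thm:small}: since $n_5\le 37$, once $m_k\ge 37$ the points of $\F$ (being in general position) contain five with distinct circumradii, so $l\ge 5$. Consequently no type-(2) curve contains $\D$, and the B\'ezout bound of $36$ applies uniformly.

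Finally I would assemble the counts. There are at most $2\binom l2\binom l3$ circles of type (1) and at most $\binom{\binom l2}2$ curves of type (2), and each point of $\F\setminus\G$ lies on one of them, so
\[
m_k=\abs\F\le l+24\binom l2\binom l3+36\binom{\binom l2}2.
\]
If $l<k$, the right-hand side is strictly smaller than $k+24\binom k2\binom k3+36\binom{\binom k2}2=O(k^5)$. Taking $m_k$ to be the larger of $37$ and one more than this last quantity therefore forces $l\ge k$, and any $k$ points of $\G$ form the desired subset; since $m_k=O(k^5)$ this proves the lemma. I expect the degenerate-curve step to be the crux: it is precisely the configuration Erd\H os overlooked, and the one new idea is that a type-(2) curve passing through all of $\D$ would already destroy the distinctness of the circumradii inside $\G$.
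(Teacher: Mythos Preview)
Your proof is correct and follows essentially the same approach as the paper's: take a maximal $\G$, split the obstructing curves into the two types, and observe that $\D\subset\C(AB,CD)$ would force a repeated circumradius inside $\G$ once $l\ge5$, which is ensured via $n_5\le37$. The only difference is cosmetic: for the type-(1) circles the paper uses the general-position hypothesis directly (at most one extra point per circle, giving $2\binom l2\binom l3$) rather than B\'ezout, so it never needs your preliminary step ruling out $\D$ being a line or circle.
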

\begin{proof}
Take $m$ points in general position on $\D$, let $\G$ be a maximal set of these points such that all its triples determine circles of distinct radii and let $l=\abs{\G}$. By Theorem \ref{thm:small} we may assume $l\geq 5$.

Each of the remaining $m-l$ points must lie on one of the two curves described above.

In Case (1), we use Erd\H os' argument: there are $\binom{l}{3}$ possible radii, $\binom{l}{2}$ possible pairs and since the points are in general position, there are at most two points for each radius and each pair. This bounds above the number of points in Case (1) by $2\binom{l}{2}\binom{l}{3}$.

In Case $(2)$, there are $\frac 12\binom{l}{2}(\binom{l}{2}-1)$ ways to choose the pairs $\{A,B\}$ and $\{C,D\}$. Consider the curve $\C(AB,CD)$, by B\'ezout's Theorem \cite{Coo1931}, either $\D$ is an irreducible component of $\C(AB,CD)$ or $\D\cap\C(AB,CD)$ contains at most $36$ points.
But if $\D\subset\C(AB,CD)$, then $\G=\{A,B\}\cup\{C,D\}$ because any other point of $\G$ would lie on $\C(AB,CD)$ and thus repeat a radius. This contradicts $l\geq 5$. Therefore $\D\cap\C(AB,CD)$ has at most $36$ points. This bounds above the number of points in Case (2) by $\frac {36}2\binom{l}{2}(\binom{l}{2}-1)$.

So the number of points in $\F\setminus\G$ is
\[m-l\le 2\binom{l}{2}\binom{l}{3} + 36\binom{\binom{l}{2}}{2},\]

from which the desired bound follows.

\end{proof}

Now the proof of Theorem \ref{thm:main} is straightforward.

\begin{proof}[Proof of Theorem \ref{thm:main}]
Once again, assume $\F$ has $n$ points and $l$ is the size of the maximal $\G\subset\F$ with all its triples having distinct circumradii. For the remaining $n-l$ points we have the same two cases.

We can bound the number of points in Case (1) by $2\binom{l}{2}\binom{l}{3}$. For case (2) we use Lemma \ref{lem:irred} to obtain a bound of $\frac 12\binom{l}{2}(\binom{l}{2}-1)m_l$. This gives
\[n-l\le 2\binom{l}{2}\binom{l}{3} + \binom{\binom{l}{2}}{2}m_l,\]
which implies that $n_k=O(k^9)$.
\end{proof}

\end{document}